\documentclass[12pt]{article}
\usepackage{latexsym,amsmath,amssymb}

     \addtolength{\textwidth}{2cm} \addtolength{\hoffset}{-1cm}
     \addtolength{\textheight}{3cm} \addtolength{\voffset}{-1.5cm}
\newif\ifdviwin

\usepackage{enumerate}
\usepackage[latin1]{inputenc}
\usepackage[english]{babel}
\usepackage{indentfirst}
\usepackage[mathscr]{eucal}
\usepackage{amssymb,amsmath,amsfonts}
\usepackage{graphicx}

\newif\ifdviwin

\dviwintrue

\def\cA{\mathcal{A}}

\def\cK{\mathcal{K}}

\def\N{\mathbb{N}}
\def\L{\mathbb{L}}

\def\R{\mathbb{R}}

\def\C{\mathbb{C}}
\def\D{\mathbb{D}}

\def\S{\mathbb{S}}
\def\X{\widetilde{X}}

\def\ve{\varepsilon}
\def\Sg{\Sigma}

\newcommand{\longui}{\operatorname{length}}
\newcommand{\dist}{\operatorname{dist}}


 
 \newtheorem{theorem}{Theorem}
 \newtheorem{proposition}{Proposition}
 \newtheorem{corollary}{Corollary}
 \newtheorem{lemma}{Lemma}
 \newtheorem{claim}{Claim}

 \newenvironment{proof}{\rm \trivlist \item[\hskip \labelsep{\it
      Proof.}]}{\nopagebreak \hfill $\Box$ \endtrivlist}


\begin{document}
\mbox{}\vspace{0.4cm}\mbox{}

\begin{center}
\rule{14cm}{1.5pt}\vspace{0.5cm}

{\Large \bf Complete minimal surfaces in $\R^3$} \\ [0.3cm]{\Large \bf with a prescribed coordinate function }\\
\vspace{0.5cm} {\large Antonio Alarc\'{o}n\footnote{Research
partially supported by MEC-FEDER grant number MTM2007-61775.} and
Isabel Fern\'{a}ndez\footnote{Research partially supported by
MEC-FEDER grant number MTM2007-64504.} }

\vspace{0.3cm} \rule{14cm}{1.5pt}
\end{center}

  \vspace{0.5cm}

\begin{abstract}
In this paper we construct complete simply connected minimal
surfaces with a prescribed coordinate function. Moreover, we prove
that these surfaces are dense in the space of all minimal surfaces
with this coordinate function (with the topology of the smooth
convergence on compact sets).
\\

\noindent 2000 Mathematics Subject Classification: Primary 53A10, Secondary 53C42, 49Q05, 30F15 \\

\noindent Keywords: Complete minimal surfaces, harmonic functions.
\end{abstract}


\section{Introduction}\label{sec:intro}

An isometric immersion $X:M\to\R^3$ of a Riemann surface into
space is said to be minimal if its coordinate functions are
harmonic on $M$. In 1980, Jorge and Xavier \cite{JX} constructed a
complete minimal surface contained in a slab of $\R^3$, disproving
a conjecture by Calabi \cite{C}. They looked for a minimal
immersion of the disk with third coordinate $x_3(z)=\mbox{Re}(z)$
and complete metric.\\

In light of the above, it appears as a natural question whether
any harmonic function can be realized as a coordinate of a
complete minimal surface. The present paper is devoted to answer
this question in the simply connected case. More specifically, we
extend Jorge-Xavier's result to prove that any harmonic function
defined on a simply connected domain is a coordinate function of a
conformal complete minimal immersion (see Theorem
\ref{th:corolario}). Moreover, we show that complete surfaces are
dense in the space of (simply connected) minimal surfaces with a
prescribed coordinate (Corollary \ref{co:dens}). These results
come as a consequence of the following one.

\begin{theorem}\label{th:main}
Let $X=(X_1,X_2,X_3):\Sg\to\R^3$ be a conformal minimal immersion
on $\Sg=\C,\D$, with $X_3$ being non-constant. Consider $K\subset
\Sg$ a compact set and $\ve>0$.

Then, there exists a complete conformal minimal immersion $Y=(Y_1,Y_2,Y_3):\Sg\to\R^3$ such that
\begin{enumerate}[\rm (a)]
\item $\|X-Y\|<\ve$ in $K$.

\item $X_3=Y_3$.
\end{enumerate}
\end{theorem}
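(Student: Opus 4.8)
The plan is to keep the third Weierstrass datum fixed and to gain completeness by deforming only the Gauss map through a López--Ros transformation, so that condition (b) holds for free. Write $X=\mathrm{Re}\int\Phi$ with $\Phi=(\phi_1,\phi_2,\phi_3)$ the holomorphic $1$-form of $X$ and $(g,\eta)$ its Weierstrass data, so that $\phi_3=g\eta$, $\phi_1=\tfrac12(1/g-g)\phi_3$ and $\phi_2=\tfrac{i}{2}(1/g+g)\phi_3$; since $X_3$ is non-constant, $\phi_3=2\,\partial X_3\not\equiv 0$. Because $\Sg$ is simply connected there are no period obstructions, so any Weierstrass data on $\Sg$ integrates to a well-defined immersion. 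I would look for $Y$ with data $(g/h,\,h\eta)$, where $h=e^{f}$ with $f$ holomorphic on $\Sg$ (so $h$ is holomorphic and zero-free, and $g/h$ has the same divisor as $g$). This preserves $\phi_3=(g/h)(h\eta)=g\eta$, hence $Y_3=X_3$ after normalizing the integration constant, giving (b); it also keeps $Y$ an immersion exactly where $X$ is one, in particular at the zeros of $\phi_3$. The induced metrics satisfy $ds_Y=\tfrac12\big(|h|/|g|+|g|/|h|\big)\,|\phi_3|$, and the two things to arrange are: $f\approx 0$ on a compact set $L\supset K$ (which forces $\Phi_Y\approx\Phi$ there and, after integrating from a base point and fixing constants, $\|X-Y\|<\ve$ on $K$, i.e. (a)); and $|h|$ enormous on a suitable ``labyrinth'', which will make $ds_Y$ complete.

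For completeness I would build, in $\Sg\setminus K$, a Jorge--Xavier type labyrinth. Fix an exhaustion $K\subset\Omega_1\subset\subset\Omega_2\subset\subset\cdots$ of $\Sg$ (sublevel sets of $|z|$, or of $\dist(\cdot,\partial\Sg)^{-1}$ when $\Sg=\D$), and in each annular region $\Omega_{n+1}\setminus\Omega_n$ place a finite maze $\Lambda_n$ of arcs, arranged --- as in Jorge--Xavier and Nadirashvili --- so that every divergent path must, within each annulus, either cross the walls of $\Lambda_n$ or run a long Euclidean distance through the corridors between them. I would route the walls clear of the (discrete) zeros of $\phi_3$, so that $|\phi_3|\ge c_n>0$ on $\Lambda_n$ and on the corridors. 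The key elementary inequalities are $ds_Y\ge\tfrac12\,(|h|/|g|)\,|\phi_3|$ and $ds_Y\ge|\phi_3|$: the first lets a single crossing of a wall, where $|h|$ is huge and $|g|$ is bounded, contribute length $\ge 1$; the second, valid everywhere and independent of $h$, bounds $ds_Y$ below by a constant multiple of the Euclidean length along the corridors, which are long by design. Combining these, a divergent path acquires length $\ge 1$ in each annulus it meets, hence infinite total length, so $ds_Y$ is complete.

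It remains to produce $f$ realizing ``$f\approx 0$ on $L$ and $\mathrm{Re}\,f\gg 0$ on $\bigcup_n\Lambda_n$''. I would do this recursively with Runge's theorem: having controlled $f$ on $\Omega_n$, the set $L\cup\Lambda_{n+1}$ is a Runge compact in $\Sg$, so I can add a holomorphic correction as large as required on $\Lambda_{n+1}$ yet smaller than $2^{-n}$ on $\Omega_n$; the corrections sum to a holomorphic $f$ on $\Sg$ with the desired behaviour. Setting $h=e^{f}$ and integrating $(g/h,\,h\eta)$ from a base point $p_0\in K$, with constants normalized so that $Y_3=X_3$ and $Y(p_0)=X(p_0)$, yields the required $Y$.

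The main obstacle is the completeness step under the rigidity imposed by (b): because $\phi_3$ must be left untouched, the only freedom is the single zero-free factor $h$, and --- unlike the original Jorge--Xavier setting where $\phi_3=dz$ never vanishes --- here $\phi_3$ may have zeros, precisely the points where the metric cannot be enlarged by inflating $|h|$. The construction must therefore steer the labyrinth and its corridors away from the zeros of $\phi_3$ while still trapping every divergent path, and must reconcile the global growth of $h$ on the walls with $\|X-Y\|<\ve$ on $K$; keeping each Runge correction negligible on the previously fixed region is what makes these two demands compatible.
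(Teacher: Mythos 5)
Your ingredients coincide with the paper's: the L\'{o}pez-Ros deformation $(g,\Phi_3)\mapsto (g/h,\Phi_3)$ with $h$ holomorphic and zero-free (which preserves the third coordinate and regularity), Jorge-Xavier labyrinths, and Runge's theorem to make $h\approx 1$ near $K$ and huge on the walls; likewise, the correct way to implement your remark about the zeros of $\phi_3$ is exactly the paper's: each compact annulus of the exhaustion contains finitely many zeros, so the entire labyrinth (walls \emph{and} corridors) is placed in a closed zero-free sub-annulus where $|\phi_3|\geq c_n>0$, and lengths of divergent paths are estimated only across that sub-annulus. What is genuinely different is the architecture. The paper proves a one-annulus lemma (one labyrinth, one Runge function, one transformation) and iterates it, producing a sequence of immersions $X_n\to Y$; this forces convergence bookkeeping at the level of immersions: the conditions $\lambda_{X_n}\geq \sigma_n\lambda_{X_{n-1}}$ with $\prod_n \sigma_n=1/2$ are needed so that the limit is still an immersion and so that the distances gained at stage $n$ survive (halved) in the limit, and Harnack's theorem is invoked to see that $Y$ is harmonic. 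You instead sum scalar Runge corrections into a single holomorphic $f$ and perform one transformation $h=e^f$ at the end: the only limiting process is a series of scalar functions, and minimality, conformality and regularity of $Y$ are automatic because $Y$ is literally a L\'{o}pez-Ros transform of $X$ (note $\lambda_Y\geq \lambda_X/(|h|+1/|h|)>0$ even at the zeros of $\phi_3$). This is closer to Jorge-Xavier's original one-shot scheme and is a clean simplification; the paper's conditions (C$_n$) are precisely the immersion-level counterpart of your requirement that later corrections must not destroy earlier wall heights.

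That requirement is also where your recursion, as written, fails. You ask the stage-$(n+1)$ correction $f_{n+1}$ to be large on $\Lambda_{n+1}$ and smaller than $2^{-n}$ on $\Omega_n$; but $\Lambda_n\subset \Omega_{n+1}\setminus\Omega_n$ lies \emph{outside} the set on which you control $f_{n+1}$, and Runge gives no information off the chosen compact set. Hence $\mathrm{Re}\,f_{n+1}$ could be hugely negative precisely on $\Lambda_n$, wiping out the wall heights built at the previous stage; this happens at every stage, so no labyrinth is guaranteed to survive and completeness is lost. The repair is one line: require $|f_{n+1}|<2^{-n}$ on $\overline{\Omega}_{n+1}$ (which contains $\Lambda_n$ and all earlier labyrinths) rather than on $\Omega_n$; the complement of $\overline{\Omega}_{n+1}\cup\Lambda_{n+1}$ is still connected thanks to the gaps in the walls, so Runge applies, and at stage $n$ you choose $\mathrm{Re}\,f_n$ on $\Lambda_n$ large enough to dominate the already fixed earlier terms plus the summable tail. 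You should also make the two quantitative choices explicit: the maze in the $n$-th sub-annulus must be fine enough that any path avoiding its walls has Euclidean length at least $1/c_n$, and the walls must be tall enough that $\tfrac12 e^{\mathrm{Re}f}|\phi_3|/|g|$ times the wall thickness exceeds $1$ (possible since poles of $g$ occur only at zeros of $\phi_3$, hence not on the walls, so $|g|$ is indeed bounded there). With these repairs your argument is complete.
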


We also derive from Theorem \ref{th:main} some results concerning
existence of complete null curves in $\C^3$ and complete maximal
surfaces in $\L^3$ with a prescribed coordinate (Section \ref{sec:cons}).\\

The construction of the Jorge-Xavier's surface relies on a clever
use of the Runge's classical theorem with a suitable labyrinth
close to the boundary of the disk. A refinement of Jorge and
Xavier's ideas led to Nadirashvili \cite{N} to construct conformal
complete bounded minimal disks. Nadirashvili's arguments have
given rise to the construction of complete bounded minimal
surfaces with other additional properties (see for instance
\cite{LMM,MN,AFM,A}). However, all the coordinate functions of
these examples are implicit.

Despite we use some ideas related to Nadirashvili's technique in
the proof of the above theorem, it is not possible in general to
construct complete bounded minimal surfaces with a prescribed
(bounded) coordinate function. We show a requirement for a
harmonic function on the disk to be the coordinate function of a
complete bounded minimal surface in Proposition \ref{prop} (see
also \cite{N2,AN}).\\

Finally, we would like to point out that the only complete simply
connected embedded minimal surfaces are the plane and the
helicoid \cite{MR,CM}. Therefore, our surfaces are not embedded,
except for the aforementioned cases.


\section{Preliminaries}\label{sec:prelim}

This section is devoted to briefly summarize the notation and
results that we use in the paper.

From now on, we denote by $\Sg$ an open simply-connected Riemann surface. By the Uniformization Theorem we can assume that $\Sg$ is eihter the complex plane $\C$ or the open unit disk $\D$. Furthermore, for any $r>0$, we denote by $\D_r=\{z\in\C\;|\; |z|<r\}$ and $\S^1_r=\{z\in\C\;|\; |z|=r\}$.

Consider a Riemannian metric $d\tau^2$ in $\Sg$. Given a curve $\alpha$ in $\Sg$, by $\longui_{d\tau^2}(\alpha)$ we mean the length of $\alpha$ with respect to the metric $d\tau^2$. Moreover, we define:
\begin{itemize}
\item $\dist_{d\tau^2}(p,q)=\inf \{\longui_{d\tau^2}(\alpha) \: | \: \alpha:[0,1]\rightarrow \Sg, \; \alpha(0)=p,\alpha(1)=q \}$, for any $p,q\in \Sg$.
\item $\dist_{d\tau^2}(T_1,T_2)=\inf \{\dist_{d\tau^2}(p,q) \;|\;p \in T_1, \;q \in T_2 \}$, for any $T_1, T_2 \subset  \Sg$.
\end{itemize}
Throughout the paper, we work with metrics induced by conformal minimal
immersions $X:\Sg\to\R^3.$ Then, by $\lambda_{X}^2|dz|^2$ we
mean the Riemannian metric induced by $X$ in $\Sg.$ We
also write $\dist_{X}(T_1,T_2)$ instead of
$\dist_{\lambda_X^2|dz|^2}(T_1,T_2),$ for any sets $T_1$ and $T_2$ in $\Sg.$

\subsection{Minimal surfaces background}

The theory of complete minimal surfaces is closely related to the theory of Riemann surfaces. This is due to the fact that any such surface is given by a triple $\Phi=(\Phi_1, \Phi_2, \Phi_3)$ of holomorphic 1-forms defined on some Riemann surface $M$ such that
\begin{equation} \label{eq:conforme}
\Phi_1^2+\Phi_2^2+\Phi_3^2=0,
\end{equation}
\begin{equation}\label{eq:inmersion}
\|\Phi_1\|^2+\|\Phi_2\|^2+\|\Phi_3\|^2 \neq 0,
\end{equation}
and all periods of the $\Phi_j$ are purely imaginary.
Then the minimal immersion $X:M
\rightarrow \R^3$ can be parameterized by $z \mapsto \mbox{Re}
\int^z \Phi.$ The above triple is called the Weierstrass
representation of the immersion $X$. Usually, the first
requirement \eqref{eq:conforme} (which ensures the conformality
of $X$) is guaranteed by introducing the formulas
\[
\Phi_1 =\frac12 \left( \frac1{g}-g\right) \, \Phi_3, \quad \Phi_2 =\frac{\rm i}2 \left( \frac1{g}+g\right) \, \Phi_3,
\]
with a meromorphic function $g$ (the stereographic projection of
the Gauss map) and a holomorphic 1-form $\Phi_3$. The pair
$(g,\Phi_3)$ is called the Weierstrass data of the minimal
immersion $X$. In this article all the minimal immersions are
defined on the simply connected Riemann surface $\Sg$. Then, the
Weierstrass data have no periods and so the only requirements are
\eqref{eq:conforme} and \eqref{eq:inmersion}. The metric of $X$
can be expressed as
\begin{equation}\label{eq:metric}
\lambda^2_X|dz|^2=\frac12
\|\Phi\|^2=\left(\frac12\left(\frac1{|g|}+|g|\right) \|\Phi_3
\|\right)^2.
\end{equation}

\subsubsection{The L\'{o}pez-Ros transformation}
The proof of Lemma \ref{lem:lemma} exploits what has come to be called the L\'{o}pez-Ros transformation. If $M$ is a Riemann surface and $(g,\Phi_3)$ are the Weierstrass data of a minimal immersion $X:M \rightarrow \R^3$, we define on $M$ the data
\[
\widetilde g= \frac{g}{h}, \qquad \widetilde \Phi_3= \Phi_3,
\]
where $h:M \rightarrow \C$ is a holomorphic function without
zeros. If the periods of this new Weierstrass representation are purely imaginary, then it defines a minimal immersion $\widetilde X: M \rightarrow \R^3$. This method provides us with a powerful and natural tool for deforming minimal surfaces. From our point of view, the most important property of the resulting surface is that the third coordinate function is preserved. Note that the intrinsic metric is given by \eqref{eq:metric} as
\[
\lambda^2_{\widetilde X}|dz|^2=\left(\frac12\left(\frac{|h|}{|g|}+\frac{|g|}{|h|}\right)\, \|\Phi_3 \|
\right)^2.
\]
This means that we can increase the intrinsic distance in a prescribed compact of $M$, by using  suitable functions $h$.


\section{Proof of Theorem \ref{th:main}}\label{sec:main}

In order to prove the main theorem we need the following technical lemma. It will be proved later in subsection \ref{sub:lemma}.

\begin{lemma}\label{lem:lemma}
Let $X=(X_1,X_2,X_3):\Sg\to\R^3$ be a conformal minimal immersion
being $X_3$ non-constant. Consider two positive constants $0<r<R$
(with $R<1$ if $\Sg=\D$).

Then, for any $\ve,s>0$ there exists a conformal minimal
immersion $\X=(\X_1,\X_2,\X_3):\Sg\to\R^3$ such that
\begin{enumerate}[\rm ({L}1)]
\item $\dist_{\X}(0,\S^1_R)>s$.

\item $\|\X-X\|<\ve$ in $\overline\D_r$.

\item $\X_3=X_3$.
\end{enumerate}
\end{lemma}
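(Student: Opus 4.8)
The plan is to realize $\X$ as the image of $X$ under a single L\'opez--Ros transformation, so that (L3) holds automatically and the whole problem reduces to choosing a zero-free holomorphic factor $h$ on $\Sg$ with two competing features: $h$ must be uniformly close to $1$ on $\overline\D_r$, while $|h|$ must be enormous on a suitable ``labyrinth'' placed in the annulus $\{r<|z|<R\}$. Writing the Weierstrass data of $X$ as $(g,\Phi_3)$, I would set $\widetilde g=g/h$, $\widetilde\Phi_3=\Phi_3$, and $\X=\mathrm{Re}\int\widetilde\Phi$. Since $\Sg$ is simply connected there is no period obstruction, and because $h$ has no zeros the pair $(\widetilde g,\Phi_3)$ again satisfies \eqref{eq:conforme}--\eqref{eq:inmersion}; thus $\X$ is a conformal minimal immersion with $\X_3=\mathrm{Re}\int\Phi_3=X_3$, giving (L3) for free. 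The remaining conditions (L1) and (L2) are then bought, respectively, by the size of $|h|$ on the labyrinth and by the closeness of $h$ to $1$ on $\overline\D_r$.

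Before building $h$ I would record the two metric inequalities that drive everything. From \eqref{eq:metric} and the arithmetic--geometric mean inequality one has the \emph{$h$-independent} lower bound $\lambda_{\X}\ge\|\Phi_3\|$ everywhere, while on any region avoiding the poles of $g$ and the zeros of $\Phi_3$ one also has the \emph{enhanced} bound $\lambda_{\X}\ge\frac{|h|}{2|g|}\|\Phi_3\|\ge\frac{c}{2C}\,|h|$, with $c=\min\|\Phi_3\|>0$ and $C=\max|g|<\infty$ over that region. Since $X_3$ is non-constant, $\Phi_3\not\equiv0$, so its zeros are isolated, and $g$ has finitely many poles in the compact annulus as well; hence both constants are available as soon as the labyrinth is kept away from this finite bad set.

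Next I would lay down the labyrinth, following the Jorge--Xavier/Nadirashvili scheme: subdivide $\{r'<|z|<R\}$ (with $r<r'<R$) by $2N$ concentric circles into thin bands, and in alternate bands delete small angular gaps whose positions are offset from band to band, producing a compact maze $L$ built from thick circular arcs and avoiding the finitely many bad points above; I would arrange the gaps so that $\overline\D_r\cup L$ has connected complement in $\Sg$. Runge's theorem, applied to $\log h$ (equivalently, approximation of a continuous function that is $\approx0$ on $\overline\D_r$ and $\approx\log M$ on $L$), then yields a holomorphic $\zeta$ on $\Sg$ with $\mathrm{Re}\,\zeta$ of the prescribed shape; setting $h=e^{\zeta}$ gives a zero-free factor with $|h-1|$ as small as desired on $\overline\D_r$ and $|h|\ge M$ on $L$. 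Condition (L2) is then immediate: on the compact simply connected set $\overline\D_r$ the holomorphic forms $\widetilde\Phi_j$ depend continuously on $h$ in the $C^0$ topology, so after normalizing the integration constant at $0$, taking $|h-1|$ small forces $\|\X-X\|<\ve$.

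The heart of the argument, and the step I expect to be hardest, is (L1): showing that \emph{every} curve $\gamma$ from $0$ to $\S^1_R$ satisfies $\longui_{\X}(\gamma)>s$. Such a $\gamma$ must cross the annular region carrying $L$, and the maze forces the usual dichotomy. Either $\gamma$ traverses the thick arcs, where the enhanced bound gives length $\ge\frac{c}{2C}M\cdot w$ for a fixed total radial width $w$; or $\gamma$ evades the arcs through the offset gaps, in which case it is forced to run angularly across essentially $N$ layers, where the $h$-independent bound $\lambda_{\X}\ge\|\Phi_3\|\ge c'>0$ (on the part of the annulus away from the isolated zeros of $\Phi_3$) makes its length $\gtrsim c'N$. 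Choosing first $N$ and then $M$ large enough makes both alternatives exceed $s$. The delicate points are the purely metric-geometric verification that no path can cheaply thread the maze --- this is where the offset of the gaps and the fixed positive width of the bands must be quantified --- together with the bookkeeping ensuring that the isolated zeros of $\Phi_3$ cannot be exploited to shorten the angular passage; both are handled by fixing the combinatorics of $L$ independently of $h$ and only afterwards letting $M\to\infty$.
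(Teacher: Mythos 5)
Your proposal is correct and is essentially the paper's own proof: a single L\'opez--Ros transformation whose parameter $h=e^{\zeta}$ comes from Runge's theorem (close to $1$ on $\overline\D_r$, huge on a Jorge--Xavier labyrinth with connected complement), the $h$-independent bound $\lambda_{\X}\ge\|\Phi_3\|$ for the angular passage, the enhanced bound $\lambda_{\X}\ge\frac{|h|}{2|g|}\|\Phi_3\|$ on the labyrinth, and the choice of $N$ first and then the Runge parameter. The only difference is cosmetic: the paper disposes of your ``delicate'' zero-avoidance bookkeeping at the outset by choosing the sub-annulus $r<r'<R'<R$ so that $|\phi_3|>\delta$ on all of $\D_{R'}\setminus\overline\D_{r'}$ (possible since the zeros of $\phi_3$ are isolated), after which both metric bounds hold uniformly on the whole annulus, not just on the labyrinth.
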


Assuming the above lemma, the proof of Theorem \ref{th:main} goes as follows. First of all, consider $r_0>0$ ($0<r_0<1$ in case $\Sg=\D$) such that $K\subset \D_{r_0}\subset \Sg$. Let $\{r_n\}_{n\in\N}$ be an increasing sequence of positives, with $r_1=r_0$, and such that $\{r_n\}\nearrow +\infty$ in case $\Sg=\C$, and $\{r_n\}\nearrow 1$ in case $\Sg=\D$. Finally, take any sequence $\{\sigma_n\}_{n\in\N}$ with $0<\sigma_n<1$ and so that $\prod_{k=1}^\infty\sigma_k = 1/2$.

We will obtain the immersion $Y$ as a limit of a sequence of immersions $\{X_n\}_{n\in\N}$. For any $n\in\N$, we will construct a family $\chi_n=\{X_n,\ve_n\}$ where $X_n:\Sg\to\R^3$ is a conformal minimal immersion and $\ve_n<6\ve/(n^2\pi^2)$ is a positive number. Furthermore, the sequence $\{\chi_n\}_{n\geq 2}$ will satisfy the following properties.
\begin{enumerate}[\rm (A$_{n}$)]
\item $\|X_n-X_{n-1}\|<\ve_n$ in $\overline{\D}_{r_{n-1}}$.

\item $\dist_{X_n}(0,\S^1_{r_n})>n$.

\item $\lambda_{X_n}\geq \sigma_n\cdot \lambda_{X_{n-1}}$ in $\overline{\D}_{r_{n-1}}$.

\item $(X_n)_3=X_3$.
\end{enumerate}

The sequence is constructed in a recursive way. The first element of the sequence is the immersion $X_1=X$ and any positive $\ve_1<6\ve/\pi^2$. Assume we have defined $\chi_1,\ldots,\chi_n$. Let us show how to construct the family $\chi_{n+1}$. Consider a sequence $\{\xi_m\}_{m\in\N}$ decreasing to zero and such that
\[
\xi_m<\min \left\{\ve_n\,,\,\frac{6\ve}{\pi^2(n+1)^2}\right\},\quad \forall m\in\N.
\]
Let $F_m:\Sg\to\R^3$ be the immersion obtained from Lemma \ref{lem:lemma} for the data
\[
X=X_n,\quad r=r_n,\quad R=r_{n+1},\quad \ve=\xi_m,\quad s=n+1.
\]
The sequence $\{F_m\}_{m\in\N}$ converges to $X_n$ uniformly on $\overline{\D}_{r_n}$. Therefore, there exists $m_0\in\N$ large enough so that
\begin{equation}\label{equ:metricas}
\lambda_{F_{m_0}}\geq \sigma_{n+1}\cdot \lambda_{X_n}\quad \text{in }\overline{\D}_{r_{n-1}}.
\end{equation}
Recall that $0<\sigma_{n+1}<1$. Label $X_{n+1}:=F_{m_0}$ and $\ve_{n+1}:=\xi_{m_0}$. Properties (L2), (L1) and (L3) imply (A$_{n+1}$), (B$_{n+1}$) and (D$_{n+1}$), respectively. Finally, inequality \eqref{equ:metricas} gives (C$_{n+1}$). This concludes the construction of the sequence $\{\chi_n\}_{n\in\N}$.

Since $\Sg=\cup_{n\in\N}\overline{\D}_{r_n}$, we infer from
properties (A$_n$), $n\in\N$, that $\{X_n\}_{n\in\N}$ converges
to a smooth map $Y:\Sg\to\R^3$ uniformly on compact sets of
$\Sg$. Let us check that $Y$ satisfies the conclusion of the
theorem.

\noindent{$\bullet$} $Y$ is an immersion. Indeed, consider $p\in\Sg$. Fix $n_0\in\N$ so that $p\in \overline{\D}_{r_{n_0}}$. From properties (C$_n$), $n> n_0$, we obtain that
\[
\lambda_{X_n}(p)\geq \frac12 \cdot\lambda_{X_{n_0}}(p),
\]
where we have used that $\prod_{k=1}^\infty \sigma_k=1/2$. If we take limits in the above inequality as $n\to\infty$ we infer that $\lambda_Y(p)\geq \frac12\lambda_{X_{n_0}}(p)>0$, and therefore, $Y$ is an immersion.

\noindent{$\bullet$} Since $Y$ is harmonic (Harnack's Theorem), it is minimal and conformal.

\noindent$\bullet$ $Y$ is complete. In order to check it, let $\alpha$ be a divergent curve in $\Sg$ starting at $0$. Then, for any $k\in\N$, we have
\[
\longui_{Y}(\alpha)\geq \longui_{Y}(\alpha\cap
\overline{\D}_{r_k})\geq \frac12 \longui_{X_k}(\alpha\cap
\overline{\D}_{r_k})>\frac{k}2,
\]
where we have used properties (B$_n$) and (C$_n$), $n\geq k$.
Hence, $\longui_{Y}(\alpha)=\infty$, which proves the
completeness of $Y$.

\noindent$\bullet$ Statement (a) follows from properties (A$_n$), $n\in\N$, and the facts that $\sum_{n\geq 1}\ve_n<\ve$ and $K\subset \overline{\D}_{r_n}$, $\forall n\in\N$.

\noindent$\bullet$ Statement (b) is a trivial consequence of properties (D$_n$), $n\in\N$.

The proof is done.


\subsection{Proof of Lemma \ref{lem:lemma}}\label{sub:lemma}

Let $(g,\Phi_3)$ be the Weierstrass data of the immersion $X$.
Since $X_3$ is non-constant and $\Sg$ is simply connected we can
write $\Phi_3=\phi_3(z)dz$ with $\phi_3$ non identically zero.
Therefore, there exist a constant $\delta>0$ and two real numbers
$r'$ and $R'$ with $r<r'<R'<R$, satisfying
\begin{equation}\label{eq:nocero}
|\phi_3|>\delta \quad \text{in }\D_{R'}\setminus
\overline{\D}_{r'}.
\end{equation}

Fix a natural $N$ (which will be specified later) such that
$2/N<R'-r'$. The immersion $\X$ will be obtained from $X$ by
using L\'{o}pez-Ros transformation. The effect of this deformation
will be concentrated on a labyrinth of compact sets contained in
$\D_{R'}\setminus \overline{\D}_{r'}$. On the other hand, the
deformation hardly acts on $\overline\D_r$. The shape of the
labyrinth is inspired in those used by Jorge and Xavier
\cite{JX}. Let us describe it.

For any $n\in\N$, $n=1,\ldots,2N^2$, define $s_n=R'-n/N^3$ and
label $s_0=R'$. Now, consider the set (see Figure
\ref{fig:laberinto})
\[
\cK_n=\left\{ z\in\C\; \left|\; s_n+\frac1{4N^3}\leq |z|\leq
s_{n-1}-\frac1{4N^3},\quad \frac1{N^2}\leq {\rm arg}((-1)^{n}z)
\leq 2\pi-\frac1{N^2}\right.\right\}.
\]
\begin{figure}[ht]
    \begin{center}
    \scalebox{0.5}{\includegraphics{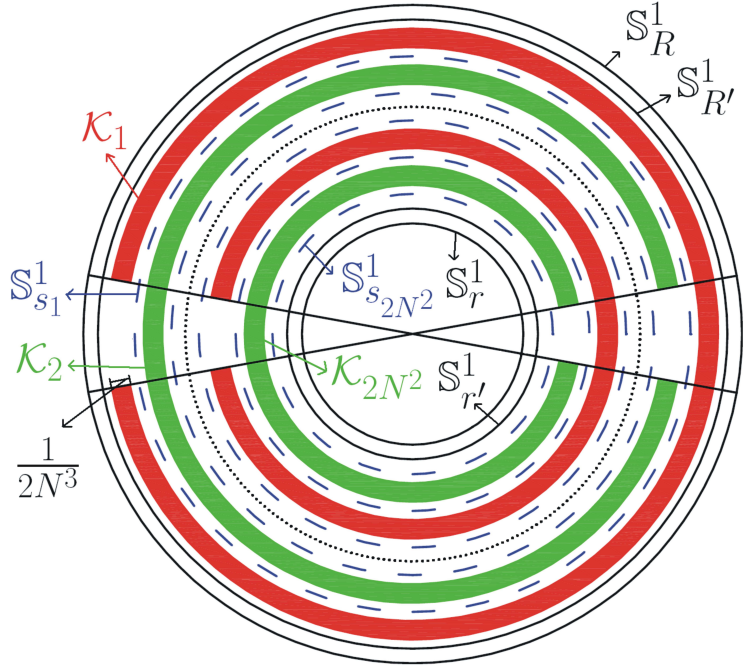}}
        \end{center}
\caption{The labyrinth of compact sets.}\label{fig:laberinto}
\end{figure}

Then, consider
\[
\cK=\bigcup_{n=1}^{2N^2}\cK_n.
\]

From the definition of the compact set $\cK$ follows that any
curve joining $\S^1_{r'}$ and $\S^1_{R'}$ without going through
$\cK$ must have large Euclidean length. This fact is stated in the
following claim.

\begin{claim}\label{claim:lemma}
Let $\lambda^2|dz|^2$ be a conformal metric on $\Sg$ satisfying
\[
\lambda\geq
\begin{cases}
c & \text{in }\D_{R'}\setminus\overline{\D}_{r'}
\\
c\, N^4 & \text{in }\cK,
\end{cases}
\]
for some $c>0$.

Then, there exists a positive constant $\rho$ not depending on $c$ nor $N$ and such that
\[
\longui_{\lambda^2|dz|^2}(\alpha)>\rho\cdot c\cdot N
\]
holds, for any $\alpha$ curve in $\Sg$ joining $\S^1_{r'}$ and $\S^1_{R'}$.
\end{claim}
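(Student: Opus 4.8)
The plan is to bound $\longui_{\lambda^2|dz|^2}(\alpha)$ from below by splitting according to how much of $\alpha$ lies inside the walls $\cK$, where $\lambda\ge cN^4$, versus the free part of $\D_{R'}\setminus\overline\D_{r'}$, where only $\lambda\ge c$ is available. Let $\ell$ denote the Euclidean length of $\alpha\cap\cK$; replacing $\alpha$ by the sub-arc contained in the closed annulus $r'\le|z|\le R'$ (which only decreases its length) I may also assume $|\alpha|\ge r'$ throughout. If $\ell\ge 1/N^3$, then
\[
\longui_{\lambda^2|dz|^2}(\alpha)\ \ge\ cN^4\cdot\ell\ \ge\ cN,
\]
so we are finished in this case with any $\rho< 1$. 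From now on I would assume $\ell<1/N^3$ and show that $\alpha$ is forced to be long in the \emph{Euclidean} metric.

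Call the $n$-th band the annulus $s_n\le|z|\le s_{n-1}$; since the $2N^2$ bands are nested, $\alpha$ must cross the full radial width of each of them. The only radial passage through the $n$-th band that avoids $\cK_n$ is the angular gate of width $2/N^2$, centred at $\arg z\equiv 0$ for $n$ even and at $\arg z\equiv\pi$ for $n$ odd. If on some band $\alpha$ traverses the radial extent $[\,s_n+\tfrac1{4N^3},\,s_{n-1}-\tfrac1{4N^3}\,]$ of $\cK_n$ without ever meeting that gate, it accumulates Euclidean length at least $1/(2N^3)$ inside $\cK_n$; as the portions of $\alpha$ lying in distinct $\cK_n$ are disjoint and $\ell<1/N^3$, this can occur for fewer than two bands. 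Hence on all but at most one band $\alpha$ does enter the gate, i.e.\ its argument comes within $1/N^2$ of $0$ (even bands) or of $\pi$ (odd bands).

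The decisive mechanism is that consecutive bands have \emph{antipodal} gates and that the wall of the intervening band blocks every radial shortcut taken at a fixed argument: to move from the gate of one band to that of the next, $\alpha$ must rotate its argument by roughly $\pi$. I would make this quantitative with a continuous lift $\theta(t)$ of $\arg\alpha(t)$ (legitimate since $|\alpha|\ge r'>0$): along $\alpha$ the value $\theta$ is driven within $1/N^2$ of $0$ and within $1/N^2$ of $\pi$ in alternation, a number of times that is of order $N$ (in fact $N^2$). The total variation of $\theta$ is therefore at least of order $N$, and because every point satisfies $|z|\ge r'$ this gives Euclidean length at least $r'\cdot\mathrm{Var}(\theta)$, hence of order $N$, off $\cK$. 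Using $\lambda\ge c$ there yields $\longui_{\lambda^2|dz|^2}(\alpha)\ge \mathrm{const}\cdot cN$, the constant depending only on the fixed radii $r',R'$ and not on $c$ or $N$. Combining the two cases gives the stated bound for a suitable $\rho$.

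The step I expect to be the main obstacle is the last one: converting ``$\alpha$ threads antipodal gates'' into an honest lower bound on $\mathrm{Var}(\theta)$ while allowing $\alpha$ to backtrack radially. The safe route is to introduce the monotone first–passage times $t_n=\sup\{t:\,|\alpha(t)|\le m_n\}$, where $m_n$ is the mid-radius of the $n$-th band; these are automatically ordered in $t$ because the radii $m_n$ are nested, and they can be combined with the gate information above to count the forced sign changes of $\theta-\pi/2$. The care needed here—reconciling the nested (hence ordered) radii of the gate-crossings with their a priori unordered times—is the crux of the estimate, and is exactly where the alternating design of the labyrinth is used.
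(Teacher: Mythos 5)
Your overall strategy is the correct one --- it is the classical Jorge--Xavier labyrinth estimate. For context: the paper itself gives no proof of this Claim; it is simply asserted to follow ``from the definition of the compact set $\cK$'', the labyrinth being the one of \cite{JX}, so the benchmark is the standard folklore argument, which is what you are reconstructing. Your reduction to a sub-arc whose interior lies in the annulus, the case $\ell\ge 1/N^3$ (where $cN^4\ell\ge cN$ indeed suffices), and the observation that a band crossed radially without meeting its gate contributes Euclidean length at least $1/(2N^3)$ to $\alpha\cap\cK$ --- hence that at most one band can be crossed this way --- are all correct.

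The proof is incomplete exactly where you say it is, and the combination you propose does not yet close the gap: the gate visits produced in your second paragraph carry no information about \emph{when} they occur along $\alpha$, and the ordered times $t_n=\sup\{t:\,|\alpha(t)|\le m_n\}$ (these are last-exit times, not first-passage times, but either kind is monotone) yield only the one-sided bound that the visit to gate $n$ happens before $t_{n-1}$; one-sided bounds cannot sequence the visits, so they cannot by themselves force $\mathrm{Var}(\theta)$ to be large. The repair is to run your wall/gate dichotomy \emph{inside} the windows determined by the $t_n$, rather than once and for all. Since $|\alpha(t)|>m_n$ for $t>t_n$ and $\alpha$ ends on $\S^1_{R'}$, there is a first instant $\sigma_n>t_n$ with $|\alpha(\sigma_n)|=s_{n-1}-\frac{1}{4N^3}$, and $\sigma_n\le t_{n-1}$ because $s_{n-1}-\frac{1}{4N^3}<m_{n-1}$; on $[t_n,\sigma_n]$ the modulus stays in the outer half of the radial span of $\cK_n$. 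Hence either $\alpha([t_n,\sigma_n])\subset\cK_n$, contributing Euclidean length at least $1/(4N^3)$ (so in your first case up to three bands may now fail, rather than one --- harmless), or $\alpha$ meets the gate of band $n$ at a time \emph{inside} $[t_n,\sigma_n]$. Because $\sigma_n\le t_{n-1}$, these windows are pairwise disjoint and ordered by $n$, so the surviving gate visits are genuinely ordered in time: pairing the bands $(2j,2j-1)$, $j=1,\dots,N^2$, each of the at least $N^2-3$ pairs with both members surviving forces an angular variation of at least $\pi-2/N^2$ on parameter intervals that are disjoint for distinct $j$. This gives Euclidean length at least $r'(N^2-3)(\pi-2/N^2)$, hence $\lambda$-length at least a constant (depending only on $r'$) times $cN^2$, and together with your first case the Claim follows with $\rho$ independent of $c$ and $N$ (the remaining case $N=1$ is trivial, since then any such curve has Euclidean length greater than $R'-r'>2$). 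With this modification your argument is complete.
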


Now, we define the function we use as parameter of the L\'{o}pez-Ros transformation.
In order to do it, for any $\beta>0$ we consider a holomorphic function $h_\beta:\Sg\to \C^*$ with the following two properties:
\begin{itemize}
\item $|h_\beta-1|<1/\beta$ in $\overline{\D}_r$.

\item $|h_\beta-\beta|<1/\beta$ in $\cK$.
\end{itemize}
The set $\cK\cup \overline{\D}_r$ is a compact set and
$\Sg\setminus (\cK\cup \overline{\D}_r)$ is connected. Hence, the
existence of the above functions is guaranteed by Runge's Theorem.

Define the Weierstrass data on $\Sg$
\[
g^\beta:=\frac{g}{h_\beta},\qquad \Phi_3^\beta :=\Phi_3.
\]
These Weierstrass data give rise to a minimal immersion
$X^\beta:\Sg\to\R^3$. Notice that $h_\beta$ converges to $1$
(resp. $\infty$) uniformly on $\overline{\D}_r$ (resp. $\cK$).
Hence, there exists a large enough $\beta_0$ such that
\begin{enumerate}[\rm (i)]
\item $\|X^{\beta_0}-X\|<\ve$ in $\overline{\D}_r$.

\item $\lambda_{X^{\beta_0}}\geq \delta\cdot N^4$ in $\cK$, where $\delta$ was defined in equation
\eqref{eq:nocero}, and $\lambda_{X^{\beta_0}}^2$ is the conformal
factor of the metric induced by $X^{\beta_0}$ (see
\eqref{eq:metric}).
\end{enumerate}
Label $\X:=X^{\beta_0}$. Let us check that $\X$ has the desired
properties, provided that $N$ was chosen to be large enough.
 From the very definition of $\X$, statements (L2) and (L3) trivially hold. In order to check (L1) notice first that
\begin{equation}\label{equ:iii}
\lambda_{\X}\geq |\phi_3|>\delta\quad \text{in }
\D_{R'}\setminus\overline{\D}_{r'},
\end{equation}
where we have taken into account \eqref{eq:nocero}. Then,
properties (ii), \eqref{equ:iii} and Claim \ref{claim:lemma}
guarantee that
\[
\longui_{\X}(\alpha)\geq \delta\cdot\rho\cdot N,
\]
for any curve $\alpha$ in $\Sg$ joining $0$ with $\S^1_R$. Assume that we chose $N$ large enough so that $\delta\cdot\rho\cdot N>s$ (recall that neither $\rho$ nor $\delta$ depend on $N$). This finishes the proof.


\section{Some consequences of Theorem \ref{th:main}}\label{sec:cons}

It may be followed from our main theorem some interesting results concerning not only minimal surfaces.

As we deal with simply connected surfaces, it is not hard to realize any harmonic function as a coordinate function of a minimal immersion. Hence, Theorem \ref{th:main} implies that any harmonic function on a simply connected domain is a coordinate function of a complete minimal immersion, as stated in the next theorem.

\begin{theorem}\label{th:corolario}
Let $\Sg=\C,$ $\D$  and  $u:\Sigma\to\R$ be a harmonic function.
Assume $u$ is non constant in case $\Sg=\D$.

Then, there exists a complete conformal minimal immersion
$Y=(Y_1,Y_2,Y_3):\Sg\to\R^3$ such that $Y_3=u$.
\end{theorem}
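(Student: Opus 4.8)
The goal is to prove Theorem \ref{th:corolario}: given a harmonic function $u$ on $\Sg = \C$ or $\D$ (non-constant when $\Sg = \D$), realize it as the third coordinate of a complete conformal minimal immersion. The plan is to first build \emph{any} conformal minimal immersion $X$ having $X_3 = u$, and then invoke Theorem \ref{th:main} to upgrade it to a complete one while preserving the third coordinate.

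For the first step, since $\Sg$ is simply connected and $u$ is harmonic, $u$ admits a harmonic conjugate $u^*$, so $\Phi_3 := du + \mathrm{i}\, du^* = \phi_3(z)\, dz$ is a holomorphic $1$-form with $\mathrm{Re}\int^z \Phi_3 = u$ (up to an additive constant, which we discard). To complete this to Weierstrass data I must choose a meromorphic $g$ so that the conformality equation \eqref{eq:conforme} and the immersion condition \eqref{eq:inmersion} hold; since $\Sg$ is simply connected the period conditions are automatic, so no period-killing is needed. Using the standard parametrization from the Preliminaries, $\Phi_1 = \frac12(1/g - g)\Phi_3$ and $\Phi_2 = \frac{\mathrm i}2(1/g + g)\Phi_3$, relation \eqref{eq:conforme} holds identically, so the only real constraint is \eqref{eq:inmersion}, i.e. that the metric $\lambda_X^2 = \big(\frac12(1/|g| + |g|)\,|\phi_3|\big)^2$ never vanishes. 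The simplest choice is $g \equiv 1$, giving $\Phi_1 = 0$, $\Phi_2 = \mathrm i\,\Phi_3$, and $\lambda_X = |\phi_3|$; then $X = (0,\, \mathrm{Re}\int \mathrm i\Phi_3,\, u)$ is conformal and is an immersion exactly where $\phi_3 \neq 0$.

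The one subtlety, and the step I expect to be the main obstacle, is ensuring \eqref{eq:inmersion} holds \emph{everywhere} on $\Sg$: the metric degenerates precisely at the zeros of $\phi_3$, which are the critical points of $u$. With the naive choice $g \equiv 1$ the map fails to be an immersion at those points. The standard fix is to choose $g$ to absorb the zeros of $\phi_3$: if $\phi_3$ has a zero of order $m$ at a point $p$, then near $p$ both $(1/g - g)\phi_3$ and $(1/g + g)\phi_3$ must stay bounded away from having a common zero with $\phi_3^2$, which is arranged by letting $g$ (or $1/g$) have a pole/zero of appropriate order at $p$ so that the products $g\,\phi_3$ and $\phi_3/g$ do not simultaneously vanish. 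Concretely, since $\Sg$ is simply connected one can write $\phi_3 = e^{2f} \psi$ or simply choose a holomorphic $g$ vanishing to order $m$ at each zero $p$ of $\phi_3$ (for instance $g$ a suitable Weierstrass/Blaschke-type product or entire function with prescribed zeros, which exists on $\C$ by Weierstrass and on $\D$ similarly); then in $\lambda_X = \frac12(|\phi_3|/|g| + |g|\,|\phi_3|)$ the term $|\phi_3|/|g|$ stays positive at $p$, so $\lambda_X > 0$ throughout. This yields a genuine conformal minimal immersion $X : \Sg \to \R^3$ with $X_3 = u$.

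With such an $X$ in hand, $X_3 = u$ is non-constant (in case $\Sg = \D$ this is our hypothesis on $u$; in case $\Sg = \C$, if $u$ were constant one treats it trivially or notes it is excluded after an additive adjustment), so the hypotheses of Theorem \ref{th:main} are met. Applying that theorem with, say, $K$ any compact set and any $\ve > 0$ produces a \emph{complete} conformal minimal immersion $Y$ with $Y_3 = X_3 = u$, which is exactly the assertion of Theorem \ref{th:corolario}. Thus the whole argument reduces to the initial realization step, whose only real content is the choice of Gauss map $g$ handling the critical points of $u$.
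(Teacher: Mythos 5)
Your proposal is correct, and its skeleton is the same as the paper's: first construct some conformal minimal immersion $X$ with $X_3=u$, then invoke Theorem \ref{th:main} to make it complete (treating the constant case on $\C$ trivially by a plane, as you both do). Where you genuinely diverge is in the realization step, i.e.\ how to handle the zeros of $\phi_3$ (the critical points of $u$), which you correctly identify as the only real obstacle but resolve with heavier machinery than necessary: you construct a holomorphic $g$ vanishing exactly at the zeros of $\phi_3$ with matching orders, which requires the Weierstrass factorization theorem on $\C$ and its analogue for the disk (your ``Blaschke-type product'' aside is the one shaky point, since the zeros of $\phi_3$ need not satisfy the Blaschke condition, although the general Weierstrass theorem for arbitrary planar domains does the job). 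The paper sidesteps all of this in one stroke: it takes as Weierstrass data the pair $(g,\Phi_3)=(\phi_3,\Phi_3)$, i.e.\ the Gauss map is $\phi_3$ itself. Then $\Phi_1=\frac12\left(1-\phi_3^2\right)dz$ and $\Phi_2=\frac{\rm i}2\left(1+\phi_3^2\right)dz$ are automatically holomorphic, and by \eqref{eq:metric} the induced metric is $\lambda_X^2|dz|^2=\frac14\left(1+|\phi_3|^2\right)^2|dz|^2\geq\frac14|dz|^2$, which never degenerates, so no analysis of critical points and no factorization theorem are needed at all. Both arguments are valid; the paper's choice of Gauss map makes your ``main obstacle'' vanish for free, while your construction shows the more general fact that any $g$ whose zero divisor coincides with that of $\phi_3$ produces an admissible starting immersion.
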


\begin{proof}
Fist of all notice that in case $\Sg=\C$ and $u$ is constant, the
plane $x_3=u$ satisfies the conclusion of the theorem.

Thus, let us assume that $u$ is non constant, and define the
holomorphic $1$-form $$\Phi_3=du+{\rm i} (\star du),$$ where
$\star$ denotes de Hodge operator. Since $\Sg$ is simply
connected we can write $\Phi_3=\phi_3(z) dz$ on $\Sg$. Then the
pair $(\phi_3,\Phi_3)$ are Weierstrass data on $\Sg$ and so they
define a conformal minimal immersion
$X=(X_1,X_2,X_3):\Sg\to\R^3$. Moreover, it is straightforward to
check that $X_3=u$. Now, applying Theorem \ref{th:main} to the
immersion $X$, any compact set $K\subset\Sg$ and any positive
$\ve$, we obtain an immersion fulfilling the statement of the
theorem.
\end{proof}

Other interesting (and immediate) consequence of Theorem
\ref{th:main} is a density type result. Let $u:\Sg\to\R$ be a non
constant harmonic function, and label $\cA_u=\{
X=(X_1,X_2,X_3):\Sg\to\R^3 \text{ conformal minimal
immersion}\;|\; X_3=u \}$. With this notation the following holds.

\begin{corollary}\label{co:dens}
Let $u:\Sg\to\R$ be a non constant harmonic function on
$\Sg=\C,\D$.

Then, complete immersions in $\cA_u$ are dense in $\cA_u$,
endowed with the topology of the uniform convergence on compact
sets.
\end{corollary}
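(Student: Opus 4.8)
The plan is to derive the corollary directly from Theorem \ref{th:main}, since membership in $\cA_u$ is precisely the requirement that the third coordinate equal $u$, and this is exactly the quantity that Theorem \ref{th:main} preserves. First I would unwind the definition of the topology: a basic neighborhood of a given immersion $X\in\cA_u$ consists of all conformal minimal immersions $Y$ satisfying $\|X-Y\|<\ve$ on some prescribed compact set $K\subset\Sg$, for some $\ve>0$. Proving density then amounts to showing that every such neighborhood contains a complete immersion that itself lies in $\cA_u$.

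So I would fix an arbitrary $X=(X_1,X_2,X_3)\in\cA_u$, a compact set $K\subset\Sg$, and a number $\ve>0$. By the very definition of $\cA_u$ we have $X_3=u$, and since $u$ is assumed non-constant, the hypotheses of Theorem \ref{th:main} (a conformal minimal immersion with non-constant third coordinate, together with a compact set and a positive error) are met for the data $X$, $K$ and $\ve$. Invoking that theorem produces a complete conformal minimal immersion $Y=(Y_1,Y_2,Y_3):\Sg\to\R^3$ with $\|X-Y\|<\ve$ on $K$ and $Y_3=X_3$.

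It then remains only to observe that $Y\in\cA_u$: indeed $Y_3=X_3=u$, so $Y$ is a conformal minimal immersion whose third coordinate is $u$, hence an element of $\cA_u$; moreover it is complete and lies in the chosen neighborhood of $X$. As $X$, $K$ and $\ve$ were arbitrary, this shows that the complete immersions are dense in $\cA_u$ for the topology of uniform convergence on compact sets.

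The corollary is thus essentially immediate, and there is no genuine analytic obstacle beyond the correct application of Theorem \ref{th:main}. The one point meriting a word of care is the verification that the non-constancy hypothesis of the main theorem holds: this is automatic here because $u$ is assumed non-constant throughout, which is precisely the reason the corollary is phrased for non-constant $u$. All the real work—the completeness together with the preservation of the prescribed coordinate—has already been carried out in Theorem \ref{th:main}, and the present statement merely repackages it as a density assertion.
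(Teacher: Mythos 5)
Your proof is correct and is exactly the argument the paper intends: the paper states Corollary \ref{co:dens} as an immediate consequence of Theorem \ref{th:main}, and your unwinding of the compact-open topology followed by a direct application of the theorem (with $X_3=u$ non-constant supplying the hypothesis, and $Y_3=X_3=u$ ensuring $Y\in\cA_u$) is precisely that intended proof, just written out in full.
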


Theorem \ref{th:main} can be applied to other geometric theories.
Maximal surfaces in the $3$-dimensional Lorentz-Minkowski space
$\L^3=(\R^3,dx_1^2+dx_2^2-dx_3^2)$ are spacelike surfaces with
vanishing mean curvature. There is a close connection between
minimal and maximal surfaces. Indeed, if
$X=\mbox{Re}\int(\Phi_1,\Phi_2,\Phi_3):\Sg\to\R^3$ is a conformal
minimal immersion defined on a simply connected surface $\Sg$,
then $$\widehat X=\mbox{Re}\int({\rm i}\Phi_1,{\rm
i}\Phi_2,\Phi_3):\Sg\to\L^3$$ is a conformal maximal immersion
(possibly with lightlike singularities), with the same third
coordinate function. See \cite{K,FLS} for more details on maximal
surfaces. Using this connection we can translate Theorem
\ref{th:corolario} to the Lorentzian setting.

\begin{corollary}
Let $\Sg=\C,$ $\D$  and  $u:\Sigma\to\R$ be a harmonic function.
Assume $u$ is non constant in case $\Sg=\D$.

Then, there exists a conformal maximal immersion (possibly with
lightlike singularities), $Y=(Y_1,Y_2,Y_3):\Sg\to\L^3$, such that
$Y_3=u$ and $Y$ is weakly complete in the sense of Umehara and
Yamada \cite{UY}.
\end{corollary}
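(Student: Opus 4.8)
The plan is to use the explicit correspondence between minimal surfaces in $\R^3$ and maximal surfaces in $\L^3$ that is recalled just before the statement, combined with Theorem \ref{th:corolario}. The key observation is that the map sending the Weierstrass-type data $(\Phi_1,\Phi_2,\Phi_3)$ of a minimal immersion to $({\rm i}\Phi_1,{\rm i}\Phi_2,\Phi_3)$ produces a maximal immersion $\widehat X:\Sg\to\L^3$ with exactly the same third coordinate function, since only the first two forms are multiplied by ${\rm i}$ and the third is left untouched.

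First I would invoke Theorem \ref{th:corolario} applied to the given harmonic function $u$ (non-constant in the case $\Sg=\D$) to obtain a complete conformal minimal immersion $Y=(Y_1,Y_2,Y_3):\Sg\to\R^3$ with $Y_3=u$. Writing $Y=\mathrm{Re}\int(\Phi_1,\Phi_2,\Phi_3)$ for its Weierstrass representation, I would then set
\[
\widehat Y=\mathrm{Re}\int({\rm i}\Phi_1,{\rm i}\Phi_2,\Phi_3):\Sg\to\L^3.
\]
By the connection stated in the excerpt this is a conformal maximal immersion (possibly with lightlike singularities), and its third coordinate is $\mathrm{Re}\int\Phi_3=Y_3=u$, which gives the desired prescribed coordinate.

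It remains to transfer the completeness. The Lorentzian metric induced by $\widehat Y$ has conformal factor built from $\|{\rm i}\Phi_1\|^2+\|{\rm i}\Phi_2\|^2+\|\Phi_3\|^2=\|\Phi_1\|^2+\|\Phi_2\|^2+\|\Phi_3\|^2$, which is precisely twice the squared conformal factor $\lambda_Y^2$ of the Riemannian metric of $Y$. Thus the natural lift metric associated with $\widehat Y$ agrees (up to the harmless constant factor) with the complete metric of $Y$. Since weak completeness in the sense of Umehara and Yamada \cite{UY} is exactly the completeness of this auxiliary conformal Riemannian metric $\|\Phi\|^2$ rather than of the degenerate Lorentzian one, the completeness of $Y$ carries over verbatim, and $\widehat Y$ is weakly complete.

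The main obstacle I would anticipate is not in the construction but in the careful handling of the singularities and of the notion of completeness: the Lorentzian induced metric degenerates at lightlike singular points, so one cannot argue completeness directly from $d\widehat Y$. The correct move is to recognize that the Umehara--Yamada weak completeness is defined through the positive-definite metric $d\sigma^2=\tfrac12\|\Phi\|^2|dz|^2$, which coincides with the metric $\lambda_Y^2|dz|^2$ of the minimal immersion $Y$; identifying this metric and citing the completeness of $Y$ from Theorem \ref{th:corolario} is the crux that makes the argument work cleanly.
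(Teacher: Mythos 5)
Your proposal is correct and follows exactly the route the paper intends: the paper states the correspondence $(\Phi_1,\Phi_2,\Phi_3)\mapsto({\rm i}\Phi_1,{\rm i}\Phi_2,\Phi_3)$ preserving the third coordinate and then simply ``translates'' Theorem \ref{th:corolario} to the Lorentzian setting, which is precisely what you do. Your extra observation that $\|{\rm i}\Phi_1\|^2+\|{\rm i}\Phi_2\|^2+\|\Phi_3\|^2=\|\Phi\|^2$, so that the Umehara--Yamada lift metric coincides with the complete metric $\lambda_Y^2|dz|^2$ of the minimal immersion, is the (correct) justification of weak completeness that the paper leaves implicit.
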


Other geometrical objects related with minimal surfaces are null
curves (see for instance \cite{MUY}). By definition, a complex
curve $F=(F_1,F_2,F_3):\Sg\to\C^3$ is said to be a holomorphic
null curve if its coordinate functions are holomorphic and they
satisfy $$(F_1')^2+(F_2')^2+(F_3')^2=0,$$ where $'$ denotes the
complex derivative. Using Weierstrass representation, simply
connected minimal surfaces in $\R^3$ can be seen as the real part
of holomorphic null curves in $\C^3$, and conversely. Moreover,
the minimal surface and the associated holomorphic null curve
have the same metric. This allows us to prove the next result.

\begin{corollary}
Let $f:\Sg\to\C$ be a holomorphic function on $\Sg=\C,\D$. Assume
$f$ is non constant if $\Sg=\D$.

Then, there exists a complete holomorphic null curve
$F=(F_1,F_2,F_3):\Sg\to\C^3$ with $F_3=f$.
\end{corollary}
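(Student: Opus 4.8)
The plan is to reduce the statement to Theorem~\ref{th:corolario} by going through the Weierstrass correspondence between holomorphic null curves and minimal surfaces. First I would extract from the prescribed holomorphic function $f$ the harmonic function $u:=\mathrm{Re}(f):\Sg\to\R$. When $\Sg=\D$ and $f$ is non constant, the Cauchy--Riemann equations force $u$ to be non constant as well (a constant real part would make $f$ constant), so the hypotheses of Theorem~\ref{th:corolario} are met; when $\Sg=\C$ there is no constraint and $u$ is simply harmonic, possibly constant, a case already covered by that theorem. Applying Theorem~\ref{th:corolario} to $u$ then produces a complete conformal minimal immersion $Y=(Y_1,Y_2,Y_3):\Sg\to\R^3$ with $Y_3=u$.

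Next I would pass from $Y$ to its associated holomorphic null curve. Writing $\Phi=(\Phi_1,\Phi_2,\Phi_3)$ for the Weierstrass data of $Y$ and using that $\Sg$ is simply connected, the $1$-forms $\Phi_j$ have no periods, so $F:=\int^{z}\Phi:\Sg\to\C^3$ is a well defined holomorphic map with $\mathrm{Re}\,F=Y$. The conformality relation \eqref{eq:conforme} reads $\sum_j(F_j')^2=0$, so $F$ is a holomorphic null curve, and because $Y$ is an immersion $F$ is non degenerate. Since, as recalled before the statement, a simply connected minimal surface and its associated null curve share the same metric, $F$ inherits the completeness of $Y$.

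It remains to match the third coordinate exactly rather than only its real part. The third Weierstrass $1$-form of $Y$ is $\Phi_3=dY_3+\mathrm{i}(\star dY_3)=du+\mathrm{i}(\star du)=f'\,dz$, whence $F_3'=f'$ and $F_3-f$ is a holomorphic constant on the connected surface $\Sg$; as $\mathrm{Re}(F_3)=Y_3=u=\mathrm{Re}(f)$, this constant is purely imaginary, say $\mathrm{i}c$ with $c\in\R$. Replacing $F$ by the translated curve $F-(0,0,\mathrm{i}c)$---still a complete holomorphic null curve, since a constant translation changes neither the null condition nor the metric---yields $F_3=f$, as desired.

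I do not expect a genuine obstacle here, consistently with the paper's description of this corollary as immediate: the only points requiring care are that completeness is preserved under the correspondence (which follows from the equality of the two metrics) and that the full holomorphic coordinate $F_3$ can be recovered from its real part, which is automatic because $\Sg$ is simply connected.
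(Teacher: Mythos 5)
Your proposal is correct and follows exactly the route the paper intends for this corollary (whose proof it leaves implicit): reduce to Theorem~\ref{th:corolario} with $u=\mathrm{Re}(f)$, pass to the associated null curve, use the equality of the two induced metrics for completeness, and absorb the residual purely imaginary constant by a translation. Your explicit verification that $\Phi_3=f'\,dz$ and the handling of the constant are exactly the details the paper omits, so there is nothing to add.
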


Finally, we would like to remark that our results are sharp in
the following sense. Recall that Nadirashvili's techniques give
complete conformal bounded minimal disks. Since our arguments are
inspired in his techniques, it could be expected that, starting
from a bounded harmonic function on the disk, one could obtain a
complete bounded minimal immersion having this function as a
coordinate function. However, the following proposition shows that
this is not possible in general.

\begin{proposition}\label{prop}
Let $X=(X_1,X_2,X_3):\D\to\R^3$ be a complete conformal minimal
immersion. Assume that $X_3$ can be extended smoothly to the
closed disk $\overline\D$.

Then,  $X_1$ and $X_2$ are unbounded on $\D$.
\end{proposition}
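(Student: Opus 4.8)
The plan is to argue by contradiction: assume that $X_1$ is bounded on $\D$, say $|X_1|\le C$, and contradict completeness (the case of $X_2$ being symmetric). First I would cash in the boundary regularity of $X_3$ at the level of the Weierstrass data. Writing $\Phi_j=\phi_j\,dz$, the identity $\phi_3=2\,\partial X_3/\partial z$ shows that a smooth extension of $X_3$ to $\overline\D$ forces a constant $M>0$ with $|\phi_3|\le M$ on $\D$; equivalently $F_3:=\int\Phi_3$ is Lipschitz and $\phi_3$ extends continuously to $\overline\D$. The conformality relation \eqref{eq:conforme} then gives $|\phi_2|^2=|\phi_1^2+\phi_3^2|\le|\phi_1|^2+M^2$, so by \eqref{eq:metric}
\[
\lambda_X\le\sqrt{|\phi_1|^2+M^2}\le|\phi_1|+M .
\]
Setting $F_1:=\int\Phi_1$ (a single-valued holomorphic function on the simply connected $\Sg=\D$ with $\mathrm{Re}\,F_1=X_1$) and using $|\phi_1|\,|dz|=|dF_1|$, for every curve $\alpha$ I obtain
\[
\longui_X(\alpha)=\int_\alpha\lambda_X\,|dz|\le\int_\alpha|dF_1|+M\int_\alpha|dz|,
\]
where $\int_\alpha|dF_1|$ is the Euclidean length of the image $F_1\circ\alpha$.

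Consequently, it would suffice to produce \emph{one} divergent curve $\alpha$ in $\D$ of finite Euclidean length whose image $F_1\circ\alpha$ also has finite Euclidean length: such an $\alpha$ has finite $X$-length, contradicting completeness. The hypothesis that $X_1=\mathrm{Re}\,F_1$ is bounded says precisely that $F_1$ maps $\D$ into the vertical strip $\{\,|\mathrm{Re}\,w|\le C\,\}$; equivalently $e^{F_1}$ is holomorphic, bounded, and bounded away from $0$, taking values in the annulus $\{\,e^{-C}\le|w|\le e^{C}\,\}$. So the task reduces to finding a finite-length curve tending to $\partial\D$ along which $F_1$ (equivalently $e^{F_1}$) has finite total variation.

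The main obstacle is exactly this construction, and I expect it to be the only genuinely hard step; everything before it is bookkeeping with the Weierstrass data. Naive radial curves need not help, since a holomorphic function with bounded real part can have infinite radial variation in every direction; and averaging over the level sets $\{X_1=c\}$ through the coarea formula fails as well, because it is controlled by the Dirichlet energy $\int_\D|\phi_1|^2$, which a bounded harmonic function need not render finite. I would therefore exploit the minimal-surface coupling that the previous approaches ignore. With $g=\phi_3/(\phi_1-\mathrm{i}\phi_2)$ the Gauss map, conformality gives $|\phi_1-\mathrm{i}\phi_2|\,|\phi_1+\mathrm{i}\phi_2|=|\phi_3|^2\le M^2$, so the two factors $|g\phi_3|$ and $|\phi_3/g|$ that govern $\lambda_X$ cannot both be large; on the ``equator'' $\{|g|=1\}$ one computes $|\phi_1|\le M$ and hence $\lambda_X$ is bounded. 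Using the boundary regularity of $\phi_3$ (by the F.\ and M.\ Riesz theorem $\phi_3\neq0$ a.e.\ on $\partial\D$, so $\delta\le|\phi_3|\le M$ near a.e.\ boundary point), the plan is to steer $\alpha$ through the region where $|g|$ stays bounded away from $0$ and $\infty$, keeping $\lambda_X$ bounded along $\alpha$ so that its $X$-length is comparable to its finite Euclidean length. The crux, and where I expect the real difficulty to lie, is showing that the set $\{\,|g|\approx1\,\}$ actually reaches $\partial\D$ along a path of finite length — a fact that ought to be forced by the boundedness of $X_1$ together with the completeness of the metric, but which I anticipate will require a careful local length–area estimate near the boundary rather than a soft compactness argument.
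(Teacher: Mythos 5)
Your reduction is sound, and it is in fact the same bookkeeping the paper performs: the smooth extension of $X_3$ bounds $|\phi_3|$, conformality \eqref{eq:conforme} controls the remaining coordinate, and everything reduces to producing one divergent curve in $\D$ along which $\int |\phi_1|\,|dz|<\infty$. But your proposal stops exactly at the step that constitutes the proof: that curve is never produced, only anticipated. The sketch via the Gauss map (steering a path through $\{|g|\approx 1\}$, plus an unproved ``local length--area estimate'' showing that this region reaches $\partial\D$ along a rectifiable path) is speculative; nothing in the proposal establishes that such a path exists, and this is not a detail one can defer --- it is the whole difficulty of the proposition.

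Moreover, the reason you abandoned radial curves --- ``a holomorphic function with bounded real part can have infinite radial variation in every direction'' --- is false, and its falsity is precisely the external input the paper uses. Bourgain's theorem \cite{B} asserts that every \emph{bounded} analytic function $G$ on $\D$ has finite radial variation $\int_0^1 |G'(re^{{\rm i}\theta})|\,dr<\infty$ along some radius. You already built the right object: $G=e^{F_1}$ is bounded analytic with $|G|\ge e^{-C}$, so $|\phi_1|=|F_1'|=|G'|/|G|\le e^{C}|G'|$, and Bourgain's radius gives $\int_0^1 |\phi_1(re^{{\rm i}\theta})|\,dr<\infty$. Feeding that radius (Euclidean length $1$) into your own inequality $\longui_{X}(\alpha)\le \int_\alpha |dF_1| + M\int_\alpha|dz|$ yields a divergent curve of finite $X$-length, contradicting completeness. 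This is exactly the paper's proof, with the roles of $X_1$ and $X_2$ interchanged: there, boundedness of $X_2$ plus Bourgain gives $\int_0^1|\phi_2|\,dr<\infty$ along some radius, smoothness of $X_3$ gives $\int_0^1|\phi_3|\,dr<\infty$, and $|\phi_1|^2\le|\phi_2|^2+|\phi_3|^2$ finishes. So the gap is not a matter of polish: without Bourgain's theorem (or a result of comparable strength) your proposal does not prove the proposition, and the alternative route you sketch would still have to be carried out in full.
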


\begin{proof}
Let $\Phi=(\Phi_1,\Phi_2,\Phi_3)$ be the Weierstrass data of $X$
and write $\Phi_j=\phi_j(z)dz$, $j=1,2,3$.

Reasoning by contradiction, let us assume that $X_2$ is bounded.
Then, Bourgain's Theorem \cite{B} gives the existence of a real
number $0<\theta<2\pi$ such that $$\int_{0}^1 |\phi_2(r
e^{i\theta})| dr <\infty.$$

On the other hand, the assumption on $X_3$ guarantees that
$$\int_{0}^1 |\phi_3(r e^{i\theta})| dr <\infty.$$ Since $X$ is a
conformal map, it follows that $|\phi_1|^2\leq |\phi_2|^2 +
|\phi_3|^2$ and so, from the above inequalities we get
 $$\int_{0}^1 \|(\phi_1,\phi_2,\phi_3)(r e^{i\theta})\| dr
<\infty,$$ which contradicts the completeness of $X$.
\end{proof}


\def\refname{References}

\vspace*{1cm}

\noindent
{\bf Antonio Alarc\'{o}n}\\
Departamento de Geometr\'{\i}a y Topolog\'{\i}a, \\
Universidad de Granada, E-18071 Granada, Spain. \\
e-mail: alarcon@ugr.es.

\vspace*{0.5cm}

\noindent
{\bf Isabel Fern\'{a}ndez}\\ Departamento de Matem\'{a}tica Aplicada I,\\
Universidad de Sevilla, E-41012 Sevilla, Spain.\\ e-mail:
isafer@us.es

\end{document}